\newcommand{\Qp}{\mathbf{Q}_p}
\newcommand{\Cp}{\mathbf{C}_p}
\newcommand{\Zp}{\mathbf{Z}_p}
\newcommand{\ZZ}{\mathbf{Z}}
\newcommand{\QQ}{\mathbf{Q}}
\newcommand{\OO}{\mathcal{O}}
\newcommand{\MM}{\mathfrak{m}}
\newcommand{\Qpbar}{\overline{\mathbf{Q}}_p}
\renewcommand{\phi}{\varphi}
\renewcommand{\projlim}{\varprojlim}
\renewcommand{\geq}{\geqslant}
\newcommand{\Nphi}{\mathcal{N}}
\newcommand{\Nm}{\mathrm{N}}
\newcommand{\bigO}{\mathrm{O}}
\newcommand{\Gal}{\mathrm{Gal}}
\newcommand{\Emb}{\mathrm{Emb}}
\newcommand{\Card}{\mathrm{Card}}
\newcommand{\Fil}{\mathrm{Fil}}
\newcommand{\GL}{\mathrm{GL}}
\newcommand{\sep}{\mathrm{sep}}
\newcommand{\val}{\mathrm{val}}
\newcommand{\vp}{\mathrm{val}_p}
\newcommand{\ac}{\mathrm{ac}}
\newcommand{\ab}{\mathrm{ab}}
\newcommand{\cyc}{\mathrm{cyc}}
\newcommand{\afont}{\mathbf{A}}
\newcommand{\efont}{\mathbf{E}}
\newcommand{\at}{\tilde{\mathbf{A}}}
\newcommand{\et}{\widetilde{\mathbf{E}}}
\newcommand{\atplus}{\tilde{\mathbf{A}}^+}
\newcommand{\etplus}{\widetilde{\mathbf{E}}^+}
\newcommand{\bdr}{\mathbf{B}_{\mathrm{dR}}}  
\newcommand{\dfont}{\mathrm{D}}
\newcommand{\dpar}[1]{(\!( #1 )\!)}
\newcommand{\dcroc}[1]{[\![ #1 ]\!]}
\newcommand{\cF}{\mathcal{F}}
\author{Laurent Berger}
\address{UMPA de l'ENS de Lyon \\
UMR 5669 du CNRS \\ IUF}
\email{laurent.berger@ens-lyon.fr}
\urladdr{perso.ens-lyon.fr/laurent.berger/}
\date{\today}
\title{Lifting the field of norms}
\subjclass{11S15; 11S20; 11S25; 11S31; 11S82; 13F25}
\keywords{field of norms; $(\phi,\Gamma)$-module; $p$-adic representation; anticyclotomic extension;  Cohen ring; non-Archimedean dynamical system}
\begin{document}

\begin{abstract}
Let $K$ be a finite extension of $\mathbf{Q}_p$. The field of norms of a $p$-adic Lie extension $K_\infty/K$ is a local field of characteristic $p$ which comes equipped with an action of $\mathrm{Gal}(K_\infty/K)$. When can we lift this action to characteristic $0$, along with a compatible Frobenius map? In this note, we formulate precisely this question, explain its relevance to the theory of $(\varphi,\Gamma)$-modules, and give a condition for the existence of certain types of lifts.
\end{abstract}

\maketitle

\tableofcontents

\setlength{\baselineskip}{18pt}

\section*{Introduction}\label{intro} 

Let $K$ be a finite extension of $\Qp$ and let $K_\infty/K$ be a totally ramified Galois extension whose Galois group $\Gamma_K$ is a $p$-adic Lie group (or, more generally, a ``strictly arithmetically profinite'' extension). Let $k_K$ denote the residue field of $K$. We can attach to $K_\infty/K$ its field of norms $X_K(K_\infty)$, a field of characteristic $p$ that is isomorphic to $k_K \dpar{\pi}$ and equipped with an action of $\Gamma_K$. Let $E$ be a finite extension of $\Qp$ such that $k_E=k_K$. In this note, we consider the question: when can we lift the action of $\Gamma_K$ on $k_K \dpar{\pi}$ to the $p$-adic completion of $\OO_E \dcroc{T}[1/T]$, which is a complete ring of characteristic $0$ that lifts $X_K(K_\infty)$, along with a compatible $\OO_E$-linear Frobenius map $\phi_q$? When it is possible to do so, we say that the action of $\Gamma_K$ is \emph{liftable}. In this case, Fontaine's construction of $(\phi,\Gamma)$-modules applies, and we get the following well-known equivalence of categories (where $\afont_K$ denotes the $p$-adic completion of $\OO_E \dcroc{T}[1/T]$).

\begin{enonce*}{Theorem A}\label{pgmintro}
If the action of $\Gamma_K$ is liftable, then there is an equivalence of categories
\[ \text{$\{ (\phi_q,\Gamma_K)$-modules on $\afont_K \} \longleftrightarrow \{ \OO_E$-linear representations of $G_K\}$}. \] 
\end{enonce*}

Such a lift is possible when $K_\infty/K$ is the cyclotomic extension, or more generally when $K_\infty$ is generated by the torsion points of a Lubin-Tate formal $\OO_F$-module for some $F \subset K$. In \S \ref{finht} of this note, we prove the following partial converse.

\begin{enonce*}{Theorem B}\label{liftintro}
If the action of $\Gamma_K$ is liftable with $\phi_q(T) \in \OO_E \dcroc{T}$, then $\Gamma_K$ is abelian, and there is an injective character $\Gamma_K \to \OO_E^\times$, whose conjugates by $\Emb(E,\Qpbar)$ are all de Rham with weights in $\ZZ_{\geq 0}$.
\end{enonce*}

At the end of \S \ref{finht}, we give some examples of constraints on the extension $K_\infty/K$ arising from the existence of such a character.

Some preliminary computations suggest that a similar result may hold in certain cases if we assume that $\phi_q(T)$ is an overconvergent power series in $T$. However at this point, I do not know for which extensions we can expect the action of $\Gamma_K$ to be liftable in general.

The initial motivation for thinking about this problem was the question of whether there is a theory of ``anticyclotomic $(\phi,\Gamma)$-modules'', that is a theory of $(\phi,\Gamma)$-modules where $\Gamma$ is the Galois group of the anticyclotomic extension $K_\infty^{\ac}/K$ of $K = \QQ_{p^2}$. Theorem B implies that there is no such theory if in addition we require that $\phi_q(T) \in \OO_K \dcroc{T}$.

\subsection*{Acknowledgements}  
Thanks to K\^az{\i}m B\"uy\"ukboduk for asking me about universal norms in the anticyclotomic tower, which started this train of thought. Thanks to Jean-Marc Fontaine for some useful discussions on this topic, in particular for a remark that led to theorem B above. Thanks to Bryden Cais and Christopher Davis for their comments about the first version of this note, to Sandra Rozensztajn for remark \ref{sr}, and to the two referees for pointing out some inaccuracies and suggesting some improvements.

\section{Lifting the field of norms}\label{fonsec}

Let $K$ be a finite extension of $\Qp$ and let $K_\infty$ be an infinite and totally ramified Galois extension of $K$ that is ``strictly arithmetically profinite'' (see \S 1.2.1 of \cite{WCN} for the definition, which we don't use; \emph{arithmetically profinite} means that the ramification subgroups $\Gamma_K^u$ of $\Gamma_K$ are open and strictness is an additional condition). Note that if $\Gamma_K = \Gal(K_\infty/K)$ is a $p$-adic Lie group, then as recalled in \S 1.2.2 of \cite{WCN}, it follows from the main theorem of \cite{S72} that $K_\infty/K$ is strictly arithmetically profinite.

We can apply to $K_\infty/K$ the ``field of norms'' construction of \cite{FW,FW2} and \cite{WCN}, which we now recall. Let $\cF$ denote the set of finite extensions $F$ of $K$ that are contained in $K_\infty$, and let $X_K(K_\infty)$ denote the set of sequences $(x_F)_{F \in \cF}$ such that $\Nm_{F_2/F_1}(x_{F_2}) = x_{F_1}$ whenever $F_1 \subset F_2$. By the results of \S 2 of \cite{WCN}, one can endow $X_K(K_\infty)$ with the structure of a field, a field  embedding of $k_K =k_{K_\infty}$ in $X_K(K_\infty)$, and a valuation $\val(\cdot)$ (where $\val(x)$ is the common value of the $\val_F(x_F)$ for $F \in \cF$). We then have the following theorem (theorem 2.1.3 of \cite{WCN}).

\begin{theo}\label{cdnthm}
The field $X_K(K_\infty)$ is a complete valued field with residue field $k_K$.
\end{theo}

If $\pi_K$ denotes a uniformizer of $X_K(K_\infty)$, then $X_K(K_\infty) = k_K \dpar{\pi_K}$. The group $\Gamma_K$ acts on $X_K(K_\infty)$. If $q = \Card(k_K)$, then we have the $k_K$-linear Frobenius map $\phi_q : X_K(K_\infty) \to X_K(K_\infty)$ given by $x \mapsto x^q$, and it commutes with the action of $\Gamma_K$. 

Let $E$ be a finite extension of $\Qp$ such that $k_E = k_K$, let $\varpi_E$ be a uniformizer of $E$ and let $\afont_K$ denote the $\varpi_E$-adic completion of $\OO_E \dcroc{T}[1/T]$. The ring $\afont_K$ is a $\varpi_E$-Cohen ring for $X_K(K_\infty)$, that is a complete discrete valuation ring whose maximal ideal is generated by $\varpi_E$ and whose residue field is $X_K(K_\infty)$. The question that we want to ask is: when can we lift the action of $\Gamma_K$ on $X_K(K_\infty)$ to an $\OO_E$-linear action on $\afont_K$, along with a compatible $\OO_E$-linear Frobenius lift?

\begin{enonce}{Question}\label{quest}
Are there power series $\{F_g(T)\}_{g \in \Gamma_K}$ and $P(T)$ in $\afont_K$, such that
\begin{enumerate}
\item $\overline{F}_g(\pi_K) = g(\pi_K)$ and $\overline{P}(\pi_K) = \pi_K^q$ in $k_K \dpar{\pi_K}$,
\item $F_g \circ P = P \circ F_g$ and $F_h \circ F_g = F_{gh}$ whenever $g,h \in \Gamma_K$?
\end{enumerate}
If the answer to this question is ``yes'', then we say that the action of $\Gamma_K$ is \emph{liftable}.
\end{enonce}

If $K$ is unramified over $\Qp$ and $K_\infty = K(\mu_{p^\infty})$ is the cyclotomic extension, then the action of $\Gamma_K$ is liftable, since for the uniformizer $\pi_K = ( (\zeta_{p^n}-1)_{K(\zeta_{p^n})} )_{n \geq 1}$ we can take $P(T)=(1+T)^p-1$ and $F_g(T)=(1+T)^{\chi_{\cyc}(g)}-1$. More generally, if $K_\infty/K$ is the extension generated by the torsion points of a Lubin-Tate formal $\OO_F$-module, with $K/F$ unramified, then the action of $\Gamma_K$ is liftable with $P(T)=[\varpi_F](T)$ and $F_g(T)=[g](T)$ in appropriate coordinates.

Write $\efont_K$ for the field $X_K(K_\infty)$ (this notation is somewhat standard, but unfortunate considering the fact that $\efont_K$ depends on $K_\infty$ but not on $K$). Recall that every finite separable extension of $\efont_K$ is of the form $\efont_L$ where $L$ is a finite extension of $K$ (\S 3.2 of \cite{WCN}), and that to the extension $\efont_L/\efont_K$, there corresponds a unique \'etale extension of $\varpi_E$-rings $\afont_L / \afont_K$. Indeed, if $\efont_L = \efont_K[X] / Q(X)$, then we can take $\afont_L = \afont_K[X]/\widetilde{Q}(X)$, where $\widetilde{Q}$ is a unitary polynomial that lifts $Q$, and the resulting ring depends only on $Q(X)$ by Hensel's lemma.

\begin{theo}\label{life}
Let $L$ be a finite extension of $K$ and let $L_\infty=L K_\infty$. If the action of $\Gamma_K$ on $\efont_K$ is liftable, then the action of $\Gamma_L$ on $\efont_L$ is liftable. 
\end{theo}

\begin{proof}
Note that $\Gamma_L$ injects into $\Gamma_K$. Since $\afont_L / \afont_K$ is \'etale, the Frobenius map $\phi_q$ and the action of $g \in \Gamma_K$ extend to $\afont_L$ (for exemple, if $x \in \afont_L$ satisfies $Q(x)=0$ with $Q(X) \in \afont_K[X]$ unitary, then $(gQ)(g(x)) = 0$ has a solution in $\afont_L$ by Hensel's lemma). There exists an element $T_L \in \afont_L$ lifting $\pi_L$ such that $\afont_L$ is the $\varpi_E$-adic completion of $\OO_E \dcroc{T_L}[1/T_L]$. We can take $F_g(T_L) = g(T_L)$ and $P(T_L)=\phi_q(T_L)$. Note that if $L_\infty/K_\infty$ is not totally ramified, then we may need to replace $E$ by a larger unramified extension of degree $d$, and $\phi_q$ by $\phi_q^d$ accordingly.
\end{proof}

Even in the case of cyclotomic extensions, the series $F_g(T)$ and $P(T)$ can be quite complicated if $L/K$ is ramified. For example, suppose that $\pi_L=\pi_K^{1/n}$ with $p \nmid n$ (this corresponds to a tamely ramified extension $L/K$). We can then take $T_L = T_K^{1/n}$ and 
\[ \phi(T_L) = \left((1+T_K)^p-1\right)^{1/n} = T_L^p \cdot \left(1+\frac{p}{T_L^n} + \cdots + \frac{p}{T_L^{n(p-1)}} \right)^{1/n}, \]
so that $P(T_L)$ is overconvergent but does not belong to $\OO_E \dcroc{T_L}$.

\begin{theo}\label{liftdown}
Let $F_\infty \subset K_\infty$ be a Galois subextension such that $K_\infty/F_\infty$ is finite, and let $\Gamma_F= \Gal(F_\infty/K)$. If the action of $\Gamma_K$ on $\efont_K$ is liftable, then the action of $\Gamma_F$ on $\efont_F$ is liftable. 
\end{theo}

\begin{proof}
We check that the action of $\Gamma_F$ lifts to $\afont_F = \afont_K^{\Gal(K_\infty/F_\infty)}$. The ring $\afont_F$ is stable under $\Gamma_F$ and $\phi_q$ by construction, and its image in $\efont_K$ is $\efont_F$ since $\afont_F$ contains both $\OO_E$ and $\Nm_{K_\infty/F_\infty}(T)$.
\end{proof}

\section{Application to $(\phi,\Gamma)$-modules}\label{pgmsec}

One reason for asking question \ref{quest} is that it is relevant to the theory of $(\phi,\Gamma)$-modules for $\OO_E$-representations of $G_K$. This theory has been developed in \cite{F90} when $K_\infty = K(\mu_{p^\infty})$, but it can easily be generalized to other extensions $K_\infty/K$ for which the action of $\Gamma_K$ is liftable, as was observed for example in \S 2.1 of \cite{AS06}. For instance, the generalization to Lubin-Tate extensions is explicitely carried out in \S 1 of \cite{KR09} and is further discussed in \cite{FX13} and \cite{CE13}. Let $\afont$ be the $\varpi_E$-adic completion of $\varinjlim_L \afont_L$, where $L$ runs through the set of finite extensions of $K$. Let $H_K = \Gal(\Qpbar/K_\infty)$.

\begin{theo}\label{questpgm}
If the action of $\Gamma_K$ is liftable, then there is an equivalence of categories
\[ \text{$\{ (\phi_q,\Gamma_K)$-modules on $\afont_K \} \longleftrightarrow \{ \OO_E$-linear representations of $G_K\}$}, \] 
given by the mutually inverse functors $\dfont \mapsto (\afont \otimes_{\afont_K} \dfont)^{\phi_q=1}$ and $V \mapsto (\afont \otimes_{\OO_E} V)^{H_K}$.
\end{theo}

\begin{proof}
The proof follows \S A.1.2 and \S A.3.4 of \cite{F90} as well as \S 2.1 of \cite{AS06}, and we sketch it here. Note that $\afont^{\phi_q=1} = \OO_E$ since $q=\Card(k_E)$. Let $\efont = \efont_K^{\sep}$, so that $\afont / \varpi_E \afont = \efont$. 

The theory of $\phi$-modules tells us that if $M$ is a $\phi_q$-module over $\efont$, then $M = \efont \otimes_{k_E} M^{\phi_q=1}$ and that $1-\phi_q : M \to M$ is surjective. These two facts imply that if $D$ is a $\phi_q$-module over $\afont_K$, then $\afont \otimes_{\afont_K} D = \afont \otimes_{\OO_E} V(D)$ with $V(D) = (\afont \otimes_{\afont_K} D)^{\phi_q=1}$.

Conversely, Hilbert's theorem 90 says that $H^1(\Gal(\efont/\efont_K),\GL_d(\efont))$ is trivial for all $d \geq 1$. The theory of the field of norms gives us an isomorphism between $\Gal(\efont/\efont_K)$ and $H_K$ (\S 3.2 of \cite{WCN}). By d\'evissage, this implies that if $V$ is an $\OO_E$-representation of $H_K$, then $\afont \otimes_{\OO_E} V = \afont \otimes_{\afont_K} D(V)$ where $D(V) = (\afont \otimes_{\OO_E} V)^{H_K}$.

These two facts imply that the functors of the theorem are mutually inverse.
\end{proof}

\section{Embeddings into rings of periods}\label{atsec}

We now explain how to view the different rings whose construction we have recalled as subrings of some of Fontaine's rings of periods (constructed for example in \cite{FPP}). Let $I$ be the ideal of elements of $\OO_{\Cp}$ with valuation at least $1/p$. Let $\et$ denote the fraction field of $\etplus=\projlim_{x \mapsto x^p} \OO_{\Cp}/I$. Let $\et_K=\et^{H_K}$. By \S 4.2 of \cite{WCN}, there is a canonical $G_K$-equivariant embedding $X_K(K_\infty) \to \et_K$ and we also denote its image by $\efont_K$. 

Let $W_E(\cdot) = \OO_E \otimes_{\OO_{E_0}} W(\cdot)$ denote the $\varpi_E$-Witt vectors. Let $\at = W_E(\et)$, and endow it with the $\OO_E$-linear Frobenius map $\phi_q$ and the $\OO_E$-linear action of $G_K$ coming from those on $\et$. These are well-defined since $E_0 \subset K$. Let $\at_K = \at^{H_K}$ so that $\at_K = W_E(\et_K)$. If $\afont_K$ is equipped with a lift of the action of $\Gamma_K$ and a commuting Frobenius map $\phi_q$, then there is an embedding $\afont_K \to \at_K$ that is compatible with $\phi_q$, $\Gamma_K$-equivariant, and lifts the embedding $\efont_K \to \et_K$. See \S A.1.3 of \cite{F90} for a proof, or simply remark that since $\afont_K$ is the $\varpi_E$-adic completion of $\OO_E \dcroc{T}[1/T]$, it is enough to show that there exists one and only one element $v \in \at_K$ (the image of $T$) that lifts $\pi_K$ and satisfies $\phi_q(v) = P(v)$. This now follows from the fact that if $S$ denotes the set of elements of $\at_K$ whose image in $\et_K$ is $\pi_K$, then $x \mapsto \phi_q^{-1} (P(x))$ is a contracting map on $S$. Let $v \in \at_K$ be the image of $T$ as above, so that $\phi_q(v) = P(v)$ and $g(v) = F_g(v)$ for all $g \in \Gamma_K$. Note that $\overline{v} = \pi_K \in \etplus$. Let $\atplus = W_E(\etplus)$

\begin{lemm}\label{regvat}
If $P(T) \in \OO_E \dcroc{T}$, then $v \in \atplus$.
\end{lemm}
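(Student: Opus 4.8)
The plan is to show that the unique element $v \in \at_K$ lifting $\pi_K$ with $\phi_q(v) = P(v)$ actually lies in the smaller ring $\atplus = W_E(\etplus)$. The idea is to run the same fixed-point argument that produced $v$, but inside $\atplus$ rather than $\at_K$: if $\phi_q^{-1} \circ P$ preserves the subset of $\atplus$ reducing to $\pi_K \in \etplus$ and is contracting there, then its unique fixed point in that set agrees with $v$ by uniqueness, forcing $v \in \atplus$.

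First I would check that $\etplus$ is stable under $H_K$ and under $\phi_q$, and that $\pi_K$, viewed as an element of $\etplus$ via the embedding $X_K(K_\infty) \to \et_K$ recalled above, is a genuine element of $\etplus \cap \et_K$; this is part of the field-of-norms formalism (\S 4.2 of \cite{WCN}). Hence $\atplus \cap \at_K = W_E(\etplus \cap \et_K)$ is $\varpi_E$-adically complete, stable under $\phi_q$, and the natural map to $\etplus \cap \et_K$ has kernel $\varpi_E$. Let $S^+$ be the set of $x$ in this ring whose image in $\etplus$ is $\pi_K$; it is nonempty (any Teichmüller-type lift works) and is a torsor under $\varpi_E \cdot (\atplus \cap \at_K)$, hence $\varpi_E$-adically closed and complete.

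Next I would verify that $x \mapsto \phi_q^{-1}(P(x))$ maps $S^+$ into itself. The key point is the hypothesis $P(T) \in \OO_E\dcroc{T}$: for $x \in S^+ \subset \atplus$, the value $P(x)$ makes sense and lies in $\atplus$ because $x$ (being in $\atplus$) has the property that its Teichmüller-coordinate expansion involves only elements of $\etplus$, and a power series with $\OO_E$-coefficients evaluated at such an $x$ converges $\varpi_E$-adically to an element of $\atplus$; here one uses that $\etplus$ is a ring. Reducing mod $\varpi_E$ gives $\overline{P}(\pi_K) = \pi_K^q$, which lies in $\etplus$; and $\phi_q^{-1}$ of an element of $\atplus$ lies in $\atplus$ since $\phi_q$ is bijective on $\et$ and preserves $\etplus$ (its inverse is $x \mapsto x^{1/p}$-style on coordinates). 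So $\phi_q^{-1}(P(x)) \in S^+$. The contraction estimate is the same one already invoked for $S$: modulo $\varpi_E^n$, two elements of $S^+$ differing by something in $\varpi_E^m(\atplus\cap\at_K)$ are sent to elements differing by something in $\varpi_E^{m+1}(\atplus\cap\at_K)$, because $\phi_q^{-1}$ divides the valuation by $p$ while $P(x)-P(y) \equiv P'(0)(x-y)$ has the same $\varpi_E$-order as $x-y$ (note $\phi_q^{-1}$ raises $\varpi_E$-adic distance). By completeness of $S^+$ there is a unique fixed point $v^+ \in S^+$.

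Finally, $v^+$ is also an element of $\at_K$ lifting $\pi_K$ and satisfying $\phi_q(v^+) = P(v^+)$, so by the uniqueness statement in the discussion preceding the lemma, $v^+ = v$; hence $v \in \atplus$. The main obstacle is the second step: making precise that a power series in $\OO_E\dcroc{T}$, evaluated at an element of $\atplus$ reducing to a uniformizer, genuinely converges to an element of $\atplus$ — i.e. controlling the Witt-vector/Teichmüller combinatorics so that no denominators or non-plus coordinates are introduced. Once convergence inside $\atplus$ is established, everything else is a rerun of the contraction argument already used to construct $v$.
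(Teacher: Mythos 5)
Your overall strategy --- rerun the fixed-point construction of $v$ inside the closed subset $S^+ = S \cap \atplus$ and invoke uniqueness of the fixed point in $S$ --- is a legitimate variant of the paper's argument, which instead proves directly by induction that $v \in \atplus + \varpi_E^k \at$ for every $k$, using $v = \phi_q^{-1}(P(v))$ and $P(T) \in T^q + \MM_E\dcroc{T}$ to improve the congruence at each step. The two arguments rest on the same key input (integrality of the coefficients of $P$ forces $\phi_q^{-1}\circ P$ to respect the plus-ring), so the difference is one of packaging rather than substance; your version requires slightly more setup ($S^+$ nonempty, stable, complete) but makes the mechanism transparent.

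However, two of the justifications you give are wrong as stated, and one of them concerns the step you yourself identify as the crux. First, $P(x) = \sum a_n x^n$ does \emph{not} converge $\varpi_E$-adically: the terms $a_n x^n$ have $\varpi_E$-adic valuation $0$ for all $n$ in general (e.g.\ for $P(T) = T^q$). The correct statement is that $x^n \to 0$ for the weak topology because $\overline{x} = \pi_K$ has positive valuation in $\etplus$, the partial sums lie in $\atplus$, and $\atplus$ is closed in $\at$ for that topology; this is what makes $P(x) \in \atplus$. Second, your explanation of the contraction is incorrect: $\phi_q^{-1}$ is a $\varpi_E$-adic \emph{isometry} of $\at$ (it is a ring automorphism fixing $\varpi_E$), it neither raises distances nor ``divides the valuation by $p$''. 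The contraction comes entirely from $P$: since $\overline{P}(T) = T^q$, the derivative $P'(T)$ lies in $\varpi_E \afont_K$, so $x - y \in \varpi_E^m \at$ implies $P(x) - P(y) \in \varpi_E^{m+1}\at$; applying the isometry $\phi_q^{-1}$ then preserves this gain. With these two points repaired, your proof goes through and agrees with the lemma.
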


\begin{proof}
We have $v - [\overline{v}] \in \varpi_E  \at$ so that $v \in \atplus + \varpi_E \at$. Suppose that $v \in  \atplus + \varpi_E^k  \at$ for some $k \geq 1$. We have $P(T) \in T^q + \MM_E \dcroc{T}$. This implies that $P(v) \in  \atplus + \varpi_E^{k+1}  \at$ and hence also $v = \phi_q^{-1} (P(v)) \in  \atplus + \varpi_E^{k+1} \at$. By induction on $k$, we get $v \in \atplus$. 
\end{proof}

In \S \ref{finht}, we use the fact that if $L$ contains $K$ and $E$, then $\atplus$ injects into $\bdr^+$ in a $G_L$-equivariant way. We also use the following lemma about $\bdr^+$.

\begin{lemm}\label{cvthbdr}
Let $E$ be a finite extension of $\Qp$ and take $f(T) \in E \dcroc{T}$. If $x \in \bdr^+$, then the series $f(x)$ converges in $\bdr^+$ if and only if the series $f(\theta(x))$ converges in $\Cp$.
\end{lemm}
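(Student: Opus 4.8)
The plan is to work with the filtration on $\bdr^+$ by powers of the maximal ideal, whose associated graded pieces are $\Cp$ (via $t^i \Cp$ for $i \geq 0$). Write $f(T) = \sum_{n \geq 0} a_n T^n$ with $a_n \in E$. The key observation is that $\theta : \bdr^+ \to \Cp$ is a ring homomorphism with kernel the maximal ideal, so $\theta(x)^n = \theta(x^n)$, and more generally convergence of a series in $\bdr^+$ is equivalent to convergence after passing to each graded piece together with control of the ``tails''. First I would dispose of the easy direction: if $f(x) = \sum a_n x^n$ converges in $\bdr^+$, then applying the continuous map $\theta$ shows $\sum a_n \theta(x)^n = f(\theta(x))$ converges in $\Cp$. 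So the content is the converse.

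**The converse.** Suppose $f(\theta(x))$ converges in $\Cp$. I would first reduce to the case $x \in \MM_{\bdr^+}$ (the maximal ideal): writing $x = \theta(x) + y$ with $y \in \MM_{\bdr^+}$ — using that $\Cp = \OO_{\Cp}[1/p]$ embeds into $\bdr^+$ as a coefficient field is false, but rather one picks any lift and adjusts — actually the cleaner route is to expand $f(x)$ as a formal power series in $y := x - \widetilde{\theta(x)}$ where $\widetilde{\theta(x)} \in \bdr^+$ is a chosen lift (e.g. the Teichmüller-type lift); then $f(x) = \sum_m \frac{f^{(m)}(\widetilde{\theta(x)})}{m!} y^m$ and one must check $\frac{f^{(m)}(\widetilde{\theta(x)})}{m!} \in \bdr^+$ converges, which again reduces via $\theta$ to convergence of $\frac{f^{(m)}(\theta(x))}{m!}$ in $\Cp$; but $f$ converging at $\theta(x)$ forces the formal derivatives $f^{(m)}/m!$ to converge at any point of the open disc of the same radius, in particular at $\theta(x)$ itself. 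Since $y \in \MM_{\bdr^+}$ is topologically nilpotent in the $t$-adic sense (every element of $\MM_{\bdr^+}$ has positive $t$-adic valuation), the series $\sum_m c_m y^m$ with $c_m = f^{(m)}(\widetilde{\theta(x)})/m! \to 0$ converges in the $t$-adically complete ring $\bdr^+$.

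**Where the care is needed.** The delicate point is that $\bdr^+$ is \emph{not} complete for a single norm — it is the inverse limit $\projlim_i \bdr^+/t^i$, and "$f(x)$ converges" means the partial sums are $t$-adically Cauchy, i.e. converge modulo each $t^i$. So I need: for each fixed $i$, the partial sums $\sum_{n=0}^N a_n x^n$ stabilize modulo $t^i$. Reducing modulo $t^i$, the ring $\bdr^+/t^i$ is an iterated extension of $\Cp$'s, and I would argue by induction on $i$. The base case $i=1$ is exactly the hypothesis. For the inductive step, writing an element of $\bdr^+/t^{i+1}$ via the exact sequence $0 \to t^i\Cp \to \bdr^+/t^{i+1} \to \bdr^+/t^i \to 0$, convergence modulo $t^i$ is known by induction, and the defect lands in $t^i \Cp \cong \Cp$; there one must see that the relevant "error series" is, up to a scalar, a derivative-type series $\sum a_n \binom{n}{i} \theta(x)^{n-i}$ (coming from expanding $x^n = (\theta(x) + y)^n$ and tracking the $y^i = (\text{unit}) \cdot t^i$ term modulo $t^{i+1}$), and this converges in $\Cp$ because it is, up to normalization, the $i$-th formal derivative of $f$ evaluated at $\theta(x)$, which converges whenever $f$ does on a disc. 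I expect this bookkeeping — identifying each graded error term with a formal derivative of $f$ at $\theta(x)$ and invoking that convergence of a power series at an interior point of its disc of convergence implies convergence of all its derivatives there — to be the main (though routine) obstacle; everything else is formal manipulation with the $t$-adic topology and the completeness of $\bdr^+$.
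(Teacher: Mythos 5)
Your overall shape (reduce to $\bdr^+/t^k$, compare $x$ with a lift of $\theta(x)$ via a binomial expansion) is the right one, but the central step is circular as written. You assert that convergence of $f^{(m)}(\widetilde{\theta(x)})/m!$ in $\bdr^+$ ``reduces via $\theta$'' to convergence of $f^{(m)}(\theta(x))/m!$ in $\Cp$ --- but that implication is precisely the statement of the lemma, applied to the series $f^{(m)}/m!$ and the element $\widetilde{\theta(x)}$, so it cannot be invoked. What actually makes the special case of a lift $x_0 \in \atplus$ work, and what the paper's proof is built on, is a pair of observations you never make: (i) $\bdr^+/t^k$ is a Banach space whose unit ball is the image of $\atplus$, so a series $\sum_n c_n u_n$ converges there as soon as the $u_n$ lie in a \emph{fixed bounded set} and $c_n \to 0$ in $E$; and (ii) one must first rescale $x$ by an element $\lambda \in E$ (after enlarging $E$) with $\vp(\lambda) = \vp(\theta(x))$, because when $\vp(\theta(x))>0$ the convergence of $f(\theta(x))$ does \emph{not} force $a_n \to 0$ (e.g.\ $f(T) = \sum_n p^{-n} T^{3n}$ converges at a point of valuation $1/2$ while its coefficients blow up). Without (ii) the criterion in (i) does not apply, and your claim that $\sum_n a_n \binom{n}{m} x_0^{n-m}$ converges in $\bdr^+$ is unsupported.

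The fallback induction on $i$ via the exact sequence $0 \to t^i \Cp \to \bdr^+/t^{i+1} \to \bdr^+/t^i \to 0$ has its own problem: convergence in the quotient together with convergence of a chosen $t^i\Cp$-component does not yield convergence in $\bdr^+/t^{i+1}$ unless you fix a continuous splitting, and the cross terms $\binom{n}{j} x_0^{n-j} y^j$ for $0 < j < i$ contribute to the $t^i$-graded piece in a way that depends on that splitting; you would also need to treat $\theta(x) = 0$ separately. The paper sidesteps all of this bookkeeping: after the rescaling it writes $x = x_0 + \omega y + t^k z$ with $x_0 \in \atplus$ and $\omega$ a generator of $\ker(\theta : \atplus \to \OO_{\Cp})$, and shows directly that \emph{all} powers $x^n$ lie in the single bounded subset $(\atplus + y\atplus + \cdots + y^{k-1}\atplus) + t^k \bdr^+$ of $\bdr^+/t^k$; boundedness of the powers plus nullity of the coefficients then gives convergence at once. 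I would restructure your argument around boundedness of $\{x^n\}$ rather than around convergence of each Taylor coefficient of $f$ at the lift.
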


\begin{proof}
We prove that the series converges in $\bdr^+/t^k$ for all $k \geq 1$. Recall that $\bdr^+/t^k$ is a Banach space, the unit ball being the image of $\atplus \to \bdr^+ / t^k$. We can enlarge $E$ so that it contains an element of valuation $\vp(\theta(x))$ and it is then enough to prove that if $\theta(x) \in \OO_{\Cp}$, then $\{x^n\}_{n \geq 0}$ is bounded in $\bdr^+/t^k$. Let $\omega$ be a generator of $\ker(\theta : \atplus \to \OO_{\Cp})$ and let $x_0$ be an element of $\atplus$ such that $\theta(x) = \theta(x_0)$. We can write $x=x_0+\omega y + t^k z$ where $y \in \atplus[1/p]$ and $z \in \bdr^+$. We then have
\[ x^n = x_0^n + \binom{n}{1} x_0^{n-1} \omega y + \cdots + \binom{n}{k-1} x_0^{n-(k-1)} (\omega y)^{k-1} + t^k z_k, \]
with $z_k \in \bdr^+$, so that $x^n \in (\atplus + y \atplus + \cdots + y^{k-1} \atplus) + t^k \bdr^+$ for all $n$.
\end{proof}

\section{Lifts of finite height}\label{finht}

In this section we prove theorem B, which we now recall.

\begin{theo}\label{carphiq}
If the action of $\Gamma_K$ is liftable with $\phi_q(T) \in \OO_E \dcroc{T}$, then $\Gamma_K$ is abelian, and there is an injective character $\Gamma_K \to \OO_E^\times$, whose conjugates by $\Emb(E,\Qpbar)$ are all de Rham with weights in $\ZZ_{\geq 0}$.
\end{theo}

Before proving theorem \ref{carphiq}, we give a number of intermediate results to the effect that if $P(T) = \phi_q(T)$ belongs to $\OO_E \dcroc{T}$, then one can improve the regularity of the power series $P(T)$ and $F_g(T)$ for $g \in \Gamma_K$.

\begin{prop}\label{colpow}
If $P(T) \in \OO_E \dcroc{T}$, then $F_g(T) \in T \cdot \OO_E \dcroc{T}$ for all $g \in \Gamma_K$.
\end{prop}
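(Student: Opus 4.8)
The plan is to transport the problem into Fontaine's rings (\S\ref{atsec}) and use Lemma \ref{regvat}. Recall that $v \in \at_K$ denotes the image of $T$, so that $\overline v = \pi_K$, $\phi_q(v) = P(v)$ and $g(v) = F_g(v)$ for all $g \in \Gamma_K$. First I would reduce to the case $P(0) = 0$. The power series $P(T) - T \in \OO_E\dcroc{T}$ has constant coefficient $P(0) \in \MM_E$, and its coefficient of $T$ is a unit because $\overline P(T) = T^q$ with $q \geq 2$; so its Newton polygon shows that it has a unique zero $x_P$ in $\MM_{\Cp}$, which is then Galois-stable and hence lies in $\MM_E$. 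Replacing the lift $T$ of $\pi_K$ by $T - x_P$ replaces $v$ by $v - x_P$, replaces $P$ by $P(T + x_P) - x_P \in \OO_E\dcroc{T}$ (which now has zero constant term), and replaces each $F_g$ by $F_g(T + x_P) - x_P$, and it preserves all the hypotheses; so from now on $P(0) = 0$.

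The heart of the matter is to show that $F_g$ has no strictly negative powers of $T$. By Lemma \ref{regvat}, $v \in \atplus$, hence $v \in \atplus_K := W_E(\etplus_K)$ where $\etplus_K := (\etplus)^{H_K}$ is the valuation ring of $\et_K$; since $G_K$ preserves $\etplus$, the group $\Gamma_K$ preserves $\atplus_K$, so $F_g(v) = g(v) \in \atplus_K$. Write $F_g(T) = h_g(T) + r_g(T)$ with $h_g = \sum_{n \geq 0} c_n T^n \in \OO_E\dcroc{T}$ and $r_g = \sum_{n < 0} c_n T^n$; because $\overline{F_g}(\pi_K) = g(\pi_K)$ is a uniformizer of $\efont_K$, every $c_n$ with $n \leq 0$ lies in $\MM_E$. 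Since $\overline v = \pi_K$ lies in the maximal ideal of $\etplus_K$, the element $v$ is topologically nilpotent in $\atplus_K$ for its weak topology --- note that $v$ is a \emph{unit} of $\at_K$, so this is not the $\varpi_E$-adic topology --- and therefore $h_g(v) = \sum_{n \geq 0} c_n v^n$ converges in $\atplus_K$. Hence $r_g(v) = g(v) - h_g(v) \in \atplus_K$ as well. I claim this forces $r_g = 0$: if not, let $j \geq 1$ be the least $\varpi_E$-adic valuation among the $c_n$ with $n < 0$ (it is finite and attained at only finitely many indices since $c_n \to 0$), and let $m < 0$ be the smallest index where it is attained. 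Then $\varpi_E^{-j}r_g(v) \in \at_K$ reduces modulo $\varpi_E$ to $\sum_n \overline{\varpi_E^{-j}c_n}\,\pi_K^{\,n} \in \efont_K$, a nonzero element of $\et_K$-valuation $m\cdot\val(\pi_K) < 0$, hence not in $\etplus_K$. But $r_g(v) = \varpi_E^j \cdot (\varpi_E^{-j}r_g(v))$ lies in $\atplus_K \cap \varpi_E^j\at_K = \varpi_E^j\atplus_K$ (both sides equal $W_E(\etplus_K) \cap \varpi_E^j W_E(\et_K) = \varpi_E^j W_E(\etplus_K)$, as one sees from Teichmüller expansions over the perfect field $\et_K$), so $\varpi_E^{-j}r_g(v)$ must reduce into $\etplus_K$ --- a contradiction. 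Therefore $r_g = 0$ and $F_g = h_g \in \OO_E\dcroc{T}$.

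It remains to kill the constant term $c_0 = F_g(0) \in \MM_E$. Since $P(0) = 0$ and $F_g, P \in \OO_E\dcroc{T}$, the relation $F_g \circ P = P \circ F_g$ is an identity of power series; comparing constant terms gives $c_0 = P(c_0)$, so $c_0$ is a fixed point of $P$ lying in $\MM_E$. By the Newton polygon argument above (now with $P(0) = 0$, so $x_P = 0$), the only such fixed point is $0$; hence $c_0 = 0$ and $F_g(T) \in T\cdot\OO_E\dcroc{T}$.

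The step I expect to be the main obstacle is the middle paragraph: one has to make sure that the ``holomorphic part'' $h_g(v)$ really lands in $\atplus_K$ --- this uses the weak topology, since $\sum c_n v^n$ does \emph{not} converge $\varpi_E$-adically (as $v$ is a unit of $\at_K$) --- and one has to verify the identity $\atplus_K \cap \varpi_E^j\at_K = \varpi_E^j\atplus_K$, which is exactly what makes the ``pole'' estimate go through. The reduction to $P(0) = 0$ and the vanishing of the constant term should be routine by comparison.
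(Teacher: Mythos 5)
Your route is genuinely different from the paper's. The paper never leaves $\afont_K$: it uses Fontaine's norm operator $\Nphi = \phi_q^{-1}\circ\Nm_{\afont_K/\phi_q(\afont_K)}$, which preserves $\OO_E \dcroc{T}$ when $P(T) \in \OO_E \dcroc{T}$ and contracts $1+\varpi_E^k\afont_K$ into $1+\varpi_E^{k+1}\afont_K$, together with the fact that $F_g(T)$ is fixed by $\Nphi$; an induction on $k$ then places $F_g(T)$ in $T\cdot\OO_E\dcroc{T}^\times$. You instead transport the problem into $\at$ and extract the absence of a polar part from lemma \ref{regvat}: since $v$, and hence $g(v)=F_g(v)$, lies in $\atplus\cap\at_K=W_E(\etplus\cap\et_K)$, the negative part $r_g$ must vanish. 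That part of your argument is correct: the identity $\atplus\cap\varpi_E^j\at=\varpi_E^j\atplus$, the reduction of $\varpi_E^{-j}r_g(v)$ to a nonzero Laurent polynomial in $\pi_K$ of negative valuation, and the resulting contradiction all check out; and the convergence of $h_g(v)$ in $W_E(\etplus\cap\et_K)$, which you rightly flag as the delicate point, does hold --- one needs that $\OO_E\dcroc{T}\to\at_K/\varpi_E^k$ is continuous from the $T$-adic to the weak topology (write $v=[\pi_K]+\varpi_E w$ with $w\in\atplus$, so the Teichm\"uller coordinates of $v^n$ modulo $\varpi_E^k$ have valuation at least $(n-k+1)\val(\pi_K)$) and that $W_E(\etplus\cap\et_K)$ is weakly closed and complete. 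This costs more machinery than the paper's self-contained $\Nphi$ computation, but it is a perfectly viable alternative for the substantive half of the statement.

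The genuine gap is in your treatment of the constant term. Replacing $T$ by $T'=T-x_P$ and proving the statement for the shifted data only yields, after undoing the substitution, that $F_g(T)\in\OO_E\dcroc{T}$ and $F_g(x_P)=x_P$; it does not yield $F_g(0)=0$. This cannot be repaired, because the constant-term half of the conclusion is not invariant under such shifts: for generic $a\in\MM_E$, the series $P(T)=(1+T-a)^p-1+a$ and $F_g(T)=(1+T-a)^{\chi_{\cyc}(g)}-1+a$ answer question \ref{quest} for the cyclotomic extension with $P(T)\in\OO_E\dcroc{T}$, yet $F_g(0)=(1-a)^{\chi_{\cyc}(g)}-1+a\neq 0$. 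What your argument actually establishes is that $F_g(T)\in\OO_E\dcroc{T}$ unconditionally, and that $F_g(T)\in T\cdot\OO_E\dcroc{T}$ under the additional hypothesis $P(0)=0$. That is exactly what is needed downstream (the proposition is only invoked after the normalization of lemma \ref{reduc1}, and the step $\Nphi(T)=T$ in the paper's own proof likewise presupposes such a normalization of $P$, as the example above shows), but you should state your conclusion in that conditional form rather than presenting the passage to $P(0)=0$ as a harmless reduction.
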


\begin{proof}
The ring $\afont_K$ is a free $\phi_q(\afont_K)$-module of rank $q$. As in \S 2.3 of \cite{F90}, let $\Nphi : \afont_K \to \afont_K$ denote the map 
\[ \Nphi : f(T) \mapsto \phi_q^{-1} \circ \Nm_{\afont_K/\phi_q(\afont_K)} (f(T)). \]
If $P(T) \in \OO_E \dcroc{T}$, then $\Nphi(\OO_E \dcroc{T}) \subset \OO_E \dcroc{T}$ since the ring $\OO_E \dcroc{T}$ is a free $\OO_E \dcroc{P(T)}$-module of rank $q$. Furthermore, we have $\Nphi(1+\varpi_E^k \afont_K) \subset 1+\varpi_E^{k+1} \afont_K$ if $k \geq 1$ (see 2.3.2 of ibid). This implies that if $k \geq 1$, then  
\[ \Nphi(\OO_E \dcroc{T}^\times+\varpi_E^k \afont_K) \subset \OO_E \dcroc{T}^\times+\varpi_E^{k+1} \afont_K, \]
and likewise, since $\Nphi(T)=T$ and $T$ is invertible in $\afont_K$,
\[ \Nphi(T \cdot \OO_E \dcroc{T}^\times+\varpi_E^k \afont_K) \subset T \cdot \OO_E \dcroc{T}^\times+\varpi_E^{k+1} \afont_K. \]
This implies, by induction on $k$, that $(T \cdot \OO_E \dcroc{T}^\times+ \varpi_E \afont_K)^{\Nphi(x)=x} \subset T \cdot \OO_E \dcroc{T}^\times$. 

We have $F_g(T) \in T \cdot \OO_E \dcroc{T}^\times+\varpi_E \afont_K$ and since $\Nphi$ commutes with the action of $\Gamma_K$, we have $\Nphi(g(T))=g(T)$ and hence $F_g(T) \in (T \cdot \OO_E \dcroc{T}^\times+\varpi_E \afont_K)^{\Nphi(x)=x} \subset T \cdot \OO_E \dcroc{T}^\times$.
\end{proof}

\begin{rema}\label{colover}
The same proof implies that if $P(T)$ is overconvergent, then so is $F_g(T)$.
\end{rema}

\begin{lemm}\label{reduc1}
If $P(T) \in \OO_E \dcroc{T}$, then there exists $a \in \MM_E$ such that if $T'=T-a$, then $\phi_q(T')=Q(T')$ with $Q(T') \in T' \cdot \OO_E \dcroc{T'}$.
\end{lemm}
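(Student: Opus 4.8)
The plan is to find a fixed point of $\phi_q$ in the maximal ideal, so that translating the coordinate $T$ by it kills the constant term. Write $P(T) = \phi_q(T) = a_0 + a_1 T + a_2 T^2 + \cdots$ with all $a_i \in \OO_E$ and, since $\overline{P}(\pi_K) = \pi_K^q$ with $q$ a power of $p$, we have $a_0 \in \MM_E$ and $P(T) \in T^q + \MM_E \dcroc{T}$; in particular $a_i \in \MM_E$ for $i < q$ and $a_q \in 1 + \MM_E$. I want $a \in \MM_E$ with $P(a) = a$: then setting $T' = T - a$ gives $\phi_q(T') = \phi_q(T) - a = P(T'+a) - a =: Q(T')$, and $Q(0) = P(a) - a = 0$, so $Q(T') \in T' \cdot \OO_E \dcroc{T'}$ (it visibly has $\OO_E$ coefficients since $a \in \OO_E$).

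So the problem reduces to solving $P(a) = a$ in $\MM_E$. The natural approach is a contraction/successive-approximation argument: consider the map $\psi : \MM_E \to \MM_E$, $\psi(x) = P(x)$. First I check $\psi$ does map $\MM_E$ into $\MM_E$: for $x \in \MM_E$, $P(x) = a_0 + \sum_{i \geq 1} a_i x^i$ converges since $x$ is topologically nilpotent and the $a_i$ are bounded, and $P(x) \equiv a_0 \equiv 0 \pmod{\MM_E}$ because every term with $i \geq 1$ lies in $\MM_E$ and $a_0 \in \MM_E$. Second, $\psi$ is contracting: for $x, y \in \MM_E$, $P(x) - P(y) = \sum_{i \geq 1} a_i (x^i - y^i) = (x-y) \sum_{i \geq 1} a_i (x^{i-1} + \cdots + y^{i-1})$; since $q \geq p \geq 2$, the coefficient $a_1$ lies in $\MM_E$ and each $x^{j} y^{i-1-j}$ with $i \geq 2$ lies in $\MM_E$, so the bracketed factor lies in $\MM_E$ and $\vp(P(x)-P(y)) \geq \vp(x-y) + \vp(\MM_E)$. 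Hence $\psi$ is a contraction on the complete metric space $\MM_E$ (note $\MM_E$ is closed in $\OO_E$, hence complete), and by the Banach fixed point theorem it has a unique fixed point $a \in \MM_E$.

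There is essentially no obstacle here; the only thing to be slightly careful about is confirming that the series defining $P(x)$ and the derived geometric-type series genuinely converge $\varpi_E$-adically when $x \in \MM_E$, which is immediate because $\MM_E$ is the set of elements of positive valuation in a discretely valued complete field and the coefficients $a_i$ are integral. One could equally phrase the fixed-point construction as an explicit limit: start with $a^{(0)} = 0$ (or $a^{(0)} = a_0$) and set $a^{(n+1)} = P(a^{(n)})$; the estimate above shows $\vp(a^{(n+1)} - a^{(n)})$ is strictly increasing, so $(a^{(n)})$ converges to the desired $a$. Either way, once $a$ is in hand the substitution $T' = T - a$ finishes the lemma, and this $a$ will be reused in the next reduction step to further simplify $Q$.
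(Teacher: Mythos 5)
Your proof is correct and follows essentially the same route as the paper: both reduce the lemma to finding $a \in \MM_E$ with $P(a)=a$ and then substituting $T'=T-a$. The only difference is how the fixed point is produced --- the paper reads it off the Newton polygon of $P(T)-T$, which has a length-one segment of slope $-\vp(P(0))$ because $P(T) \equiv T^q \bmod{\varpi_E}$ makes the linear coefficient of $P(T)-T$ a unit, whereas you obtain it from the contraction mapping theorem on the complete space $\MM_E$; both arguments are valid and rest on the same observations that $P(0) \in \MM_E$ and $P'(0) \in \MM_E$.
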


\begin{proof}
Let $R(T)=P(T+a)$. We have $\phi_q(T')=\phi_q(T-a)=P(T)-a=R(T')-a$ so it is enough to find $a \in \MM_E$ such that $P(a)=a$. The Newton polygon of $P(T)-T$ starts with a segment of length $1$ and slope $-\vp(P(0))$, which gives us such an $a$ with $\vp(a)=\vp(P(0))$.
\end{proof}

\begin{lemm}\label{reduc3}
If $P(T) \in T \cdot \OO_E \dcroc{T}$, then $P'(0) \neq 0$.
\end{lemm}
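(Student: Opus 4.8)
The plan is to argue by contradiction. Suppose $P(T) \in T^2 \cdot \OO_E\dcroc{T}$, i.e. $P'(0) = 0$. Recall from the set-up in \S\ref{atsec} that there is a distinguished element $v \in \atplus$ (using Lemma \ref{regvat}, since $P(T) \in \OO_E\dcroc T$) lifting $\pi_K$, characterized as the unique fixed point in $\at_K$ of the contracting map $x \mapsto \phi_q^{-1}(P(x))$; equivalently $\phi_q(v) = P(v)$ and $g(v) = F_g(v)$ for all $g \in \Gamma_K$. Enlarging $E$ (hence also $L \supset K, E$) so that $\atplus \hookrightarrow \bdr^+$ $G_L$-equivariantly, we may view $v \in \bdr^+$, and then apply $\theta : \bdr^+ \to \Cp$ to get $\theta(v) \in \OO_{\Cp}$. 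The functional equation $\phi_q(v) = P(v)$ does not directly descend through $\theta$ (since $\phi_q$ does not commute with $\theta$), so the first step is to extract a genuine equation in $\Cp$: the point is that $\theta(v) \in \mathfrak{m}_{\Cp}$ (as $\overline v = \pi_K$ has positive valuation), while the hypothesis $P(T) \in T^2\OO_E\dcroc T$ forces $\vp(P(\theta(v))) \geq 2\vp(\theta(v))$.

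The key step is to iterate. From $v = \phi_q^{-n}(P^{\circ n}(v))$ for all $n \geq 1$, I want to show that $\theta(v)$ is forced to be $0$, which will be the desired contradiction (since $\theta(v) = 0$ would mean $v \in \ker\theta = (\omega)$, but then its image $\pi_K$ in $\etplus$ would be a $p$-th power, contradicting that $\pi_K$ is a uniformizer — or more simply, one tracks valuations directly). Concretely: applying $\theta\circ\phi_q^{-n}$ and using that $\phi_q^{-1}$ acts on $\atplus$ with $\theta\circ\phi_q^{-1}(x) = \theta(\phi_q^{-1}(x))$ having controlled valuation (a $p$-th root up to bounded perturbation — this is where the precise relation between $\theta$, $\phi_q$ and the valuation on $\etplus$ enters), one gets an inequality of the shape $\vp(\theta(v)) \geq c \cdot q^n / p^n \cdot \vp(\theta(v)) - (\text{bounded})$ or, more carefully, that the positive quantity $\vp(\theta(v))$ must satisfy $\vp(\theta(v)) \geq 2^n \cdot (\text{const}) \cdot \vp(\theta(v))$ after suitably absorbing the Frobenius twists, which is impossible for a nonzero finite value. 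The doubling coming from $P(T)\in T^2\OO_E\dcroc T$ is exactly what is not available when $P'(0)\neq 0$, and it is what drives the contradiction.

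Alternatively — and this may be cleaner — one can stay at the level of $\bdr^+$ using Lemma \ref{cvthbdr}: the series $P^{\circ n}(v)$ converges in $\bdr^+$ iff $P^{\circ n}(\theta(v))$ converges in $\Cp$, and if $P'(0) = 0$ then $\vp(P^{\circ n}(\theta(v))) \to \infty$ at least like $2^n \vp(\theta(v))$, so $P^{\circ n}(\theta(v)) \to 0$; transporting this back through $\phi_q^{-n}$ and the identity $v = \phi_q^{-n}(P^{\circ n}(v))$ shows $v$ lies in arbitrarily deep powers of $\ker\theta$, forcing $\theta(\phi_q^{-n}(v)) = \theta(v)^{1/q^n}\cdot(\text{unit-type factor})$ to behave incompatibly, whence $\pi_K = \overline v$ is infinitely $q$-divisible in $\etplus$, contradicting $\pi_K$ being a uniformizer.

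The main obstacle I anticipate is the bookkeeping of how $\theta$ interacts with $\phi_q^{-1}$ on $\atplus$: one needs the precise statement that for $x \in \atplus$ with $\overline x \in \etplus$, $\theta(x)$ and $\overline x$ have comparable valuations (normalized so that $\val(\overline v) = \vp(\theta(v))$ up to the usual identifications), and that $\phi_q$ corresponds to the $q$-power Frobenius on $\etplus$, so that the valuation of $\overline{\phi_q^{-n}(v)} = \overline v^{1/q^n}$ is $\val(\overline v)/q^n$. Reconciling the \emph{finite, positive} valuation $\val(\pi_K)$ on the one hand with the \emph{super-exponentially shrinking} bound forced by the $T^2$-divisibility of $P$ on the other is the crux; once that tension is set up correctly, the contradiction is immediate. (It is worth noting that the conclusion $P'(0)\neq 0$ is exactly what one expects: $P$ reduces to $T \mapsto T^q$, whose derivative vanishes, so the lift $P$ having $P'(0)\neq 0$ is a genuine characteristic-zero constraint, visible for instance in the Lubin--Tate case where $P'(0) = \varpi_F$.)
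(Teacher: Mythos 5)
There is a genuine gap, and it is structural: your argument tries to derive a contradiction from the hypotheses ``$P(T)\in T^2\cdot\OO_E\dcroc{T}$'' and ``there exists $v\in\atplus$ lifting $\pi_K$ with $\phi_q(v)=P(v)$'' alone, without ever using the action of $\Gamma_K$. No such contradiction exists. Take $P(T)=T^q$ and $v=[\pi_K]$ the Teichm\"uller lift: then $\phi_q(v)=[\pi_K^q]=[\pi_K]^q=P(v)$, $v\in\atplus$, $\theta(v)=\pi_K^{(0)}\neq 0$, and $P'(0)=0$. All the valuation bookkeeping you propose is satisfied here without tension: $\vp(\theta(\phi_q^m(v)))=q^m\vp(\theta(v))\to\infty$ is true and harmless (the paper itself uses the fact that $\theta(\phi_q^m(v))\to 0$ in the proof of theorem B), and the intended final contradiction --- ``$\pi_K=\overline{v}$ is infinitely $q$-divisible in $\etplus$, contradicting $\pi_K$ being a uniformizer'' --- is not a contradiction at all, since $\etplus=\projlim_{x\mapsto x^p}\OO_{\Cp}/I$ is perfect and every element of it is infinitely $q$-divisible; $\pi_K$ is a uniformizer of the discretely valued subfield $\efont_K$, not of $\et$. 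Likewise the heuristic inequality $\vp(\theta(v))\geq 2^n(\text{const})\vp(\theta(v))$ never materializes, because $\theta\circ\phi_q^{-n}$ is a different homomorphism whose effect on valuations exactly offsets the growth coming from iterating $P$.

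What actually forces $P'(0)\neq 0$ is the requirement that the noninvertible series $P$ commute with the invertible series $F_g$, which your proof never invokes. The paper's argument is purely formal: by proposition \ref{colpow} each $F_g$ lies in $T\cdot\OO_E\dcroc{T}$, with linear coefficient $f_1(g)$; writing $P(T)=\pi_kT^k+\bigO(T^{k+1})$, the relation $F_g\circ P=P\circ F_g$ gives $f_1(g)=f_1(g)^k$, so if $k\geq 2$ one can pick $g\neq 1$ in the open subgroup $f_1^{-1}(1+2p\OO_E)$ with $f_1(g)=1$; since the action of $\Gamma_K$ on $\efont_K$ is faithful, $F_g(T)=T+T^ih(T)$ with $i\geq 2$ and $h(0)\neq 0$, and comparing lowest-degree terms in $P(T)^ih(P(T))=T^ih(T)P'(T)+\bigO(T^{2i+k-2})$ yields $ki=i+k-1$, hence $k=1$. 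This is exactly Lubin's ``invertible commuting with noninvertible'' phenomenon (remark \ref{lubin}); any correct proof must exploit the $\Gamma_K$-equivariance, not just the single equation $\phi_q(v)=P(v)$.
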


\begin{proof}
By proposition \ref{colpow}, we have $F_g(T)  \in T \cdot \OO_E \dcroc{T}$ for all $g \in \Gamma_K$. Write $F_g(T)=f_1(g) T + \bigO( T^2 )$ and $P(T) = \pi_k T^k + \bigO(T^{k+1})$ with $\pi_k \neq 0$. Note that $g \mapsto f_1(g)$ is a character $f_1 : \Gamma_K \to \OO_E^\times$. The fact that $F_g (P(T)) = P(F_g(T))$ implies that $f_1(g) \pi_k = \pi_k f_1(g)^k$ so that if $k \neq 1$, then $f_1(g)^{k-1}=1$. 

In particular, taking $g$ in the open subgroup $f_1^{-1}(1+2p \OO_E)$ of $\Gamma_K$, we must have $f_1(g)=1$. Take such a $g \in \Gamma_K \setminus \{ 1 \}$; since $\overline{F}_g(T) \neq T$, we can write $F_g(T) = T + T^i h(T)$ for some $i \geq 2$ with $h(0) \neq 0$. The equation $F_g (P(T)) = P(F_g(T))$ and the fact that $P(T + T^i h(T))=\sum_{j \geq 0} (T^i h(T))^j P^{(j)}(T)/j!$ imply that
\[ P(T) + P(T)^i h(P(T))  = P(T) + T^i h(T) P'(T) + \bigO (T^{2i+k-2}), \]
so that $P(T)^i h(P(T))  = T^i h(T) P'(T) + \bigO (T^{2i+k-2})$. The term of lowest degree of the LHS is of degree $ki$, while on the RHS it is of degree $i+k-1$. We therefore have $ki=i+k-1$, so that $(k-1)(i-1)=0$ and therefore $k=1$.
\end{proof}

\begin{proof}[Proof of theorem \ref{carphiq}] 
By the preceding results, if $P(T) \in \OO_E \dcroc{T}$, then we can make a change of variable so that $P(T) \in T \cdot \OO_E \dcroc{T}$ and  $F_g(T)  \in T \cdot \OO_E \dcroc{T}$ for all $g \in \Gamma_K$. Write $P(T) = \sum_{k \geq 1} \pi_k T^k$. By lemma \ref{reduc3}, we have $\pi_1 \neq 0$. If $A(T) = \sum_{k \geq 1} a_k T^k \in E \dcroc{T}$ with $a_1=1$, then the equation $A(P(T)) = \pi_1 \cdot A(T)$ is given by 
\[  P(T) + a_2 P(T)^2 + \cdots = \pi_1 \cdot ( T + a_2 T^2 + \cdots). \]
Looking at the coefficient of $T^k$ in the above equation, we get the equation 
\[   x_{k,1} a_1 + \cdots + x_{k,k-1} a_{k-1} = a_k(\pi_1-\pi_1^k), \] 
where $x_{k,i}$ is the coefficient of $T^k$ in $P(T)^i$ and hence belongs to $\OO_E$. This implies that the equation $A(P(T)) = \pi_1 \cdot A(T)$ has a unique solution in $E \dcroc{T}$, and that $a_k \in \pi_1^{1-k} \cdot \OO_E$. In particular, the power series $A(T)$ belongs to $\OO_E \dcroc{T/\pi_1}$ and so has a nonzero radius of convergence. If $g \in \Gamma_K$, then we have 
\[ A(F_g(P(T))) = A(P(F_g(T))) = \pi_1 \cdot A(F_g(T)). \]
This implies that if $B(T) = f_1(g)^{-1} \cdot A(F_g(T))$, then $b_1=1$ and $B(P(T)) = \pi_1 \cdot B(T)$, so that $B(T)=A(T)$ and hence $A(F_g(T)) = f_1(g) \cdot A(T)$ for all $g \in \Gamma_K$. The map $g \mapsto f_1(g)$ is therefore injective, since $f_1(g)=1$ implies that $F_g(T)=T$ so that $g=1$.

Recall that in \S \ref{atsec}, we have seen that there is a map $\afont_K \to \at$ that commutes with $\phi_q$ and the action of $G_K$. Let $v \in \at$ be the image of $T$. By lemma \ref{regvat}, $v \in  \atplus$. We have $\theta(v) \in \MM_{\Cp}$ and $\theta(\phi_q^m(v)) = \theta (P \circ \cdots \circ P (v))$ so that there exists $m_0 \geq 0$ such that $\theta(\phi_q^m(v))$ is in the domain of convergence of $A(T)$ if $m \geq m_0$. By lemma \ref{cvthbdr}, the series $A(\phi_q^m(v))$ converges in $(\bdr^+)^{H_L}$ where $L=KE$ and if $g \in G_L$, then we have 
\[ g( A(\phi_q^m(v))) = A(F_g(\phi_q^m(v))) = f_1(g) \cdot A(\phi_q^m(v)). \]
We now show that $A(\phi_q^m(v)) \neq 0$ for some $m \geq m_0$. If $\theta (\phi_q^m(v)) = 0$ for some $m$, then $\phi_q^m(v) \in \Fil^k \setminus \Fil^{k+1} \bdr^+$ for some $k \geq 1$, and then $A(\phi_q^m(v)) \in \Fil^k \setminus \Fil^{k+1} \bdr^+$ as well, so that $A(\phi_q^m(v)) \neq 0$. If $\theta (\phi_q^m(v)) \neq 0$ for all $m \geq m_0$, then the sequence $\{ \theta(\phi_q^m(v)) \}_{m \geq m_0}$ converges to zero in $\Cp$ and if $A(\phi_q^m(v)) = 0$ for all $m \geq m_0$, then $A(\theta(\phi_q^m(v))) = 0$ for all $m \geq m_0$ and this implies that $A(T)=0$ since $0$ would not be an isolated zero of $A(T)$. There is hence some $m \geq m_0$ such that $A(\phi_q^m(v)) \neq 0$, and the fact that $g( A(\phi_q^m(v))) = f_1(g) \cdot A(\phi_q^m(v))$ if $g \in G_L$ implies that the character $g \mapsto f_1(g)$ is de Rham and that its weight is in $\ZZ_{\geq 0}$.

The conjugates of $g \mapsto f_1(g)$ are treated in the same way. If $h \in \Emb(E,\Qpbar)$, then choose some $n(h) \in \ZZ$ such that $h = [x \mapsto x^p]^{n(h)}$ on $k_E$ so that $h=\phi^{n(h)}$ on $\OO_{E_0}$. Define an element $h(v) \in W_{h(E)}(\etplus)$ by the formula $h(e \otimes a) = h(e) \otimes \phi^{n(h)}(a)$. If $v \in W_E(\etplus)$ satisfies $\phi_q(v) = P(v)$ and $g(v) = F_g(v)$ for $g \in \Gamma_K$, then $\phi_q(h(v)) = P^h (h(v))$ and $g(h(v)) = F^h_g(h(v))$. The same reasoning as above now implies that the character $g \mapsto h(f_1(g))$ is de Rham and that its weight is in $\ZZ_{\geq 0}$.
\end{proof}

\begin{exem}\label{ezpdrp}
If $E=\Qp$, then theorem B implies that $K_\infty \subset \Qp^{\ab} \cdot L$ where $L$ is a finite extension of $K$. Indeed, every de Rham character $\eta : G_K \to \Zp^\times$ is of the form $\chi_{\cyc}^r \cdot \mu$ for some $r \in \ZZ$ and some potentially unramified character $\mu$ (see \S 3.9 of \cite{FST}).
\end{exem}

More generally, the condition that there is an injective character $\eta : \Gamma_K \to \OO_E^\times$, whose conjugates by $\Emb(E,\Qpbar)$ are all de Rham with weights in $\ZZ_{\geq 0}$, imposes some constraints on $K_\infty/K$. Here is a simple example (recall that $E$ is a finite extension of $\Qp$ such that $k_E=k_K$).

\begin{prop}\label{kprim}
If $K$ is a Galois extension of $\Qp$ of degree $d$, where $d$ is a prime number, and if $\eta : \Gamma_K \to \OO_E^\times$ is a de Rham character with weights in $\ZZ_{\geq 0}$, then the Lie algebra of the image of $\eta$ is either $\{0\}$, $\Qp$ or $K$.
\end{prop}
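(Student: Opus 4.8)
The plan is to move everything to the derivative of $\eta$ at $1$ via local class field theory, and then to apply the normal basis theorem. First I would reduce to inertia: the subextension of $K_\infty/K$ cut out by $\ker\eta$ is totally ramified, so the inertia subgroup $I_K$ of $G_K$ surjects onto $\eta(\Gamma_K)$, i.e. $\eta(\Gamma_K)=\eta(I_K)$. Composing $\eta|_{I_K}$ with the reciprocity map, which identifies the image of $I_K$ in $G_K^{\mathrm{ab}}$ with $\OO_K^\times$, gives a continuous homomorphism $\psi\colon\OO_K^\times\to\OO_E^\times$ with $\psi(\OO_K^\times)=\eta(\Gamma_K)$. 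Differentiating at $1$ and using $\mathrm{Lie}(\OO_K^\times)=K$, the Lie algebra we want is $\mathfrak g=\mathrm d\psi(K)$, where $\mathrm d\psi\colon K\to E$ is a $\Qp$-linear map.

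Next I would read off the shape of $\mathrm d\psi$ from the de Rham hypothesis. By local class field theory together with the classification of de Rham characters of $G_K$ (\S 3.9 of \cite{FST}), the assumption that $\eta$ and all its $\Emb(E,\Qpbar)$-conjugates are de Rham with weights in $\ZZ_{\geq 0}$ amounts exactly to saying that $\mathrm d\psi$ is, up to an overall sign coming from the normalization of reciprocity, a combination $\sum_\sigma n_\sigma\cdot\sigma$ of the embeddings $\sigma\colon K\to\Qpbar$ with $n_\sigma\in\ZZ_{\geq 0}$. After enlarging $E$ so that it contains $K$ — which affects neither $\mathfrak g$ nor the hypotheses — and using that $K/\Qp$ is Galois, this reads $\mathrm d\psi=\sum_{g\in\Gal(K/\Qp)}n_g\cdot g$ as a map $K\to K$.

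Finally I would run a group-algebra computation. Since $d$ is prime, $\Gal(K/\Qp)$ is cyclic, so $\mathrm d\psi$ is linear over $A:=\Qp[\Gal(K/\Qp)]$, and the normal basis theorem makes $K$ a free $A$-module of rank one; thus $\mathfrak g=\mathrm d\psi(K)$ is, as an $A$-module, the principal ideal generated by $N:=\sum_g n_g\,g$ in $A$. Because $t^d-1=(t-1)\Phi_d(t)$ for $d$ prime, $A\cong\Qp\times\bigl(\Qp\otimes_\QQ\QQ(\zeta_d)\bigr)$, where the second factor is a product of fields; the $\Qp$-component of $N$ is $\sum_i n_i$, and its image in $\Qp\otimes_\QQ\QQ(\zeta_d)$ is that of $\sum_i n_i\zeta_d^i\in\ZZ[\zeta_d]$, which is $0$ exactly when $\Phi_d(t)=1+t+\dots+t^{d-1}$ divides $\sum_i n_i t^i$ in $\ZZ[t]$ — that is, exactly when all the $n_i$ are equal — and is a unit of $\Qp\otimes_\QQ\QQ(\zeta_d)$ otherwise. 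Three cases follow: all $n_i=0$ gives $\mathfrak g=\{0\}$; all $n_i$ equal to some $c>0$ gives $\mathrm d\psi=c\cdot\mathrm{Tr}_{K/\Qp}$, so $\mathfrak g=\Qp$; and in every remaining case the $\QQ(\zeta_d)$-components of $N$ are nonzero while $\sum_i n_i>0$ by positivity, so $N$ is a unit of $A$ and $\mathfrak g=K$.

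The delicate points are the input to the second step — extracting from the de Rham condition that $\mathrm d\psi$ is an integral non-negative combination of embeddings, which requires the structure theory of de Rham characters and some care with normalizations — and the idea of recognising $\mathrm d\psi(K)$ as a group-algebra ideal. Both hypotheses enter essentially: without positivity one could have $\sum_i n_i=0$ with the $n_i$ not all equal, producing $\mathfrak g\cong\QQ(\zeta_d)$ of dimension $d-1\notin\{0,1,d\}$ for $d\geq 3$; and primality of $d$ is exactly what forces divisibility by $\Phi_d$ to make all the $n_i$ coincide (it already fails for $d=6$).
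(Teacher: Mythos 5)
Your proof is correct and follows essentially the same route as the paper's: both pass through local class field theory to reduce to the $\Qp$-linear map $\sum_h a_h\, h$ on $K$ and then exploit the fact that, for $d$ prime and $a_i \in \ZZ_{\geq 0}$, one can have $\sum_i a_i\zeta_d^{ij}=0$ for some $j$ only when all the $a_i$ coincide. The paper packages this as the factorization of the circulant determinant $\prod_{j}\sum_{i}\zeta_d^{ij}a_i$ rather than via the normal basis theorem and the splitting of $\Qp[\Gal(K/\Qp)]$, but it is the same computation.
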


\begin{proof}
By local class field theory, $\Gamma_K$ can be realized as a quotient of $\OO_K^\times$ and $\eta$ can be seen as a character $\eta : \OO_K^\times \to \OO_E^\times$. This character is then the product of a finite order character by $x \mapsto \prod_{h \in  \Gal(K / \Qp)}  h(x)^{a_h}$ where $a_h$ is the weight of $\eta$ at the embedding $h$, so that $a_h \in \ZZ_{\geq 0}$. It is therefore enough to prove that if $f : K \to K$ is defined by $f = \sum_{h \in  \Gal(K / \Qp)} a_h \cdot h$ with $a_h \in \ZZ_{\geq 0}$, then the image of $f$ is either $\{0\}$, $\Qp$ or $K$.

Let $g$ be a generator of $\Gal(K / \Qp)$ and write $a_i$ for $a_{g^i}$ if $i \in \ZZ/d\ZZ$. If $\sum_i a_i g^i(x) = 0$ for some $x \in K^\times$, then $\sum_i a_{i+j} g^i(x) = 0$ for all $j \in \ZZ/d\ZZ$. This implies that the circulant matrix $( a_{i+j} )_{i,j}$ is singular. Its determinant is $\prod_{j=0}^{d-1} \sum_{i=0}^{d-1} \zeta_d^{ij} a_i$ where $\zeta_d$ is a primitive $d$-th root of $1$. Since $d$ is a prime number and $a_i \in \ZZ_{\geq 0}$ for all $i$, we can have $\sum_{i=0}^{d-1} \zeta_d^{ij} a_i = 0$ for some $j$ if and only if all the $a_i$ are equal to each other. In this case, $f$ is equal to $a_0 \cdot \mathrm{Tr}_{K/\Qp}(\cdot)$. Otherwise, $f : K \to K$ is bijective. This proves the proposition.
\end{proof}

\begin{coro}\label{anticyclex}
If $K = \QQ_{p^2}$ and $K_\infty$ is the anticylotomic extension of $K$ and $E$ is a totally ramified extension of $\QQ_{p^2}$, then it is not possible to find a lift for $\phi_q$ and $\Gamma_K$ such that $\phi_q(T) \in \OO_E \dcroc{T}$. 
\end{coro}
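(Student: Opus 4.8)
The plan is to argue by contradiction, combining Theorem \ref{carphiq} with Proposition \ref{kprim}. Suppose that the action of $\phi_q$ and $\Gamma_K$ is liftable with $\phi_q(T) \in \OO_E \dcroc{T}$. Since $E$ is totally ramified over $\QQ_{p^2} = K$, we have $k_E = k_K$, so Theorem \ref{carphiq} applies and produces an injective character $\eta : \Gamma_K \to \OO_E^\times$ all of whose conjugates by $\Emb(E,\Qpbar)$ are de Rham with weights in $\ZZ_{\geq 0}$; in particular $\eta$ itself is a de Rham character with weights in $\ZZ_{\geq 0}$.

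Next I would recall the structure of $\Gamma_K$ for the anticyclotomic extension. By local class field theory, $\Gal(K_\infty/K)$ is the quotient of $\OO_K^\times$ by the closed subgroup generated by the elements $x \cdot \sigma(x)$, where $\sigma$ is the nontrivial element of $\Gal(K/\Qp)$; this subgroup equals $\Nm_{K/\Qp}(\OO_K^\times) = \Zp^\times$ because $K/\Qp$ is unramified, and it has $\Zp$-corank $1$ in $\OO_K^\times$. Hence $\Gamma_K$ is a $p$-adic Lie group of dimension $1$, and, $\eta$ being injective and continuous, $\eta(\Gamma_K)$ is again a $p$-adic Lie group of dimension $1$.

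Since $K = \QQ_{p^2}$ is Galois over $\Qp$ of prime degree $d = 2$, Proposition \ref{kprim} tells us that the Lie algebra of $\eta(\Gamma_K)$ is $\{0\}$, $\Qp$ or $K$; the dimension count just made, together with the injectivity of $\eta$ (which rules out $\{0\}$ since $\Gamma_K$ is infinite), forces it to be $\Qp$. Going back into the proof of Proposition \ref{kprim}, the case of Lie algebra $\Qp$ occurs precisely when all the weights $a_h$ equal a common nonzero value $a$, so that, up to a character of finite order $N$, $\eta$ is $x \mapsto \Nm_{K/\Qp}(x)^a$ on $\OO_K^\times$. But $\eta$ descends to $\Gamma_K$, hence $\eta(x \cdot \sigma(x)) = 1$ for all $x$; raising this to the $N$-th power and using $\Nm_{K/\Qp}(x \cdot \sigma(x)) = \Nm_{K/\Qp}(x)^2$ gives $\Nm_{K/\Qp}(x)^{2aN} = 1$ for all $x \in \OO_K^\times$, which is impossible for $a \neq 0$ because $\Nm_{K/\Qp}(\OO_K^\times) = \Zp^\times$ contains elements of infinite order. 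This contradiction proves the corollary.

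I expect the only point requiring care to be the bookkeeping in the middle steps: confirming that $\Gamma_K$ is one-dimensional, so that both the $\{0\}$ and the $K$ alternatives of Proposition \ref{kprim} are excluded, and matching the ``all weights equal'' case with the defining anticyclotomic relation $\eta \circ \sigma = \eta^{-1}$, which is what makes the norm character degenerate. One can also bypass Proposition \ref{kprim} entirely: writing $\eta = \varepsilon \cdot \prod_h h(\cdot)^{a_h}$ on $\OO_K^\times$ with $\varepsilon$ of finite order and all $a_h \in \ZZ_{\geq 0}$ (as in the proof of Proposition \ref{kprim}), the relation $\eta \circ \sigma = \eta^{-1}$ forces $a_{h\sigma} = -a_h$ for every $h$, hence $a_h = 0$ by positivity, so $\eta$ has finite order, contradicting injectivity.
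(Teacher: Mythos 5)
Your argument is correct and is essentially the deduction the paper intends (the corollary is stated without proof, as an immediate consequence of Theorem \ref{carphiq} and Proposition \ref{kprim}): the injective de Rham character with nonnegative weights must, since $\Gamma_K$ is a one-dimensional $p$-adic Lie group, fall into the $\Qp$ case of Proposition \ref{kprim}, i.e.\ be a power of the norm up to finite order, which is incompatible with its triviality on $\{x\sigma(x)\}=\Zp^\times$. Your closing remark --- that the relation $\eta\circ\sigma=\eta^{-1}$ forces $a_{h\sigma}=-a_h$ and hence all weights vanish --- is a clean shortcut that bypasses Proposition \ref{kprim} entirely and is also valid.
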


\begin{rema}\label{sr}
If $d$ is not a prime number, then the conclusion of proposition \ref{kprim} does not necessarily hold anymore. This is already the case if $\Gal(K/\Qp) = \ZZ/4\ZZ$.
\end{rema}

\begin{rema}\label{lubin}
There is some similarity between our methods for proving theorem B and the constructions of \cite{JL94}. For example, the power series $A(T)$ constructed in the proof of theorem B is denoted by $\mathbf{L}_f$ in \S 1 of ibid.\ and called the \emph{logarithm}. Theorem B is then consistent with the suggestion on page 341 of ibid.\ that ``for an invertible series to commute with a noninvertible series, there must be a formal group somehow in the background''. Indeed, the existence of a de Rham character $\Gamma_K \to \OO_E^\times$ with weights in $\ZZ_{\geq 0}$ indicates that the extension $K_\infty/K$ must in some sense ``come from geometry''.
\end{rema}

\providecommand{\bysame}{\leavevmode ---\ }
\providecommand{\og}{``}
\providecommand{\fg}{''}
\providecommand{\smfandname}{\&}
\providecommand{\smfedsname}{\'eds.}
\providecommand{\smfedname}{\'ed.}
\providecommand{\smfmastersthesisname}{M\'emoire}
\providecommand{\smfphdthesisname}{Th\`ese}

\end{document}